\newcommand{\Z}{{\mathbb Z}}
\newcommand{\Q}{{\mathbb Q}}
\newcommand{\R}{{\mathbb R}}
\newcommand{\C}{{\mathbb C}}
\newcommand{\BP}{{\mathbb P}}
\newcommand{\CZ}{{\mathcal Z}}
\newcommand{\HH}{{\mathcal H}}
\newcommand{\To}{\longrightarrow}
\newcommand{\SL}{\operatorname{SL}}
\newcommand{\PGL}{\operatorname{PGL}}
\newcommand{\PSL}{\operatorname{PSL}}
\newcommand{\U}{\operatorname{U}}
\newcommand{\half}{\tfrac{1}{2}}
\newcommand{\stab}{{\text{st}}}
\newcommand{\sstab}{{\text{sst}}}
\newcommand{\xx}{{\mathbf x}}
\newenvironment{Proof}{\par\noindent{\sc Proof:}}%
                      {\hspace*{\fill}\nobreak$\Box$\par}
\newtheorem{Theorem}{Theorem}
\newtheorem{Lemma}[Theorem]{Lemma}
\newtheorem{Corollary}[Theorem]{Corollary}
\theoremstyle{definition}
\newtheorem{Definition}[Theorem]{Definition}
\newtheorem{Example}[Theorem]{Example}
\newtheorem{Remark}[Theorem]{Remark}
\numberwithin{equation}{section}
\begin{document}

\title[Reduction theory of point clusters]%
      {Reduction theory of point clusters \\ in projective space}

\author{Michael Stoll}
\address{Mathematisches Institut,
         Universit\"at Bayreuth,
         95440 Bayreuth, Germany.}
\email{Michael.Stoll@uni-bayreuth.de}

\date{31 August, 2009}

\maketitle



\section{Introduction}

In this paper, we generalise the results of~\cite{StollCremona}
on the reduction theory of binary forms, which describe positive zero-cycles
in~$\BP^1$, to positive zero-cycles (or point clusters) in projective spaces
of arbitrary dimension. This should have applications to more general
projective varieties in~$\BP^n$, by associating a suitable positive zero-cycle
to them in an $\PGL(n+1)$-invariant way. We discuss this in the case of
(smooth) plane curves.

The basic problem motivating this work is as follows. Consider projective
varieties over~$\Q$ in some~$\BP^n\!$, with fixed discrete invariants. On 
this set, there is an action of $\SL(n+1, \Z)$ by linear
substitution of the coordinates. We would like to be able to select
a specific representative of each orbit, which we will call {\em reduced},
in a way that is as canonical as possible. Hopefully, this representative
will then also allow a description as the zero set of polynomials with
fairly small integer coefficients.

Recall the main ingredients of the approach taken in~\cite{StollCremona}.
The key role is played by a map~$z$ from binary forms of degree~$d$ 
into the symmetric space of~$\SL(2, \R)$ (which is the hyperbolic plane~$\HH$
in this case) that is equivariant with respect to the action of~$\SL(2, \Z)$.
We then define a form~$F$ to be {\em reduced} if $z(F)$ is in the standard
fundamental domain for~$\SL(2, \Z)$ in~$\HH$. In order to make the map~$z$
as canonical as possible, we use a larger group than~$\SL(2, \Z)$, namely
$\SL(2, \C)$; we then look for a map $z$ from binary forms with complex
coefficients into the symmetric space~$\HH_\C$ for~$\SL(2, \C)$ that is
$\SL(2, \C)$-equivariant and commutes with complex conjugation. This map
restricted to real forms will have image contained in~$\HH$ and satisfy
our initial requirement.

Now there are in general many possible such maps~$z$ (for exceptions,
see below). We therefore need to pick one of them. In~\cite{StollCremona}
this is achieved by a geometric property: we define a function on~$\HH_\C$
that measures how far
a point is from the roots of~$F$ (up to an arbitrary additive constant);
the covariant $z(F)$ is then the unique point in~$\HH_\C$ minimising
this distance.

In our more general situation, we work with the space $\HH_{n,\R}$ of
positive definite quadratic forms in $n+1$ variables,
modulo scaling, and the space $\HH_{n,\C}$ of positive definite Hermitian
forms in $n+1$ variables, modulo scaling (by positive real factors).
There is a natural action of complex conjugation on~$\HH_{n,\C}$; the
subset fixed by it can be identified with~$\HH_{n,\R}$.

We use the formula for the distance function mentioned above to obtain a similar
function on~$\HH_{n,\C}$, depending on a collection of points
in~$\BP^n(\C)$. 
Under a suitable condition on the point cluster or zero-cycle~$Z$,
this distance function has a unique critical point, which 
provides a global minimum. We assign this point to~$Z$ as its
covariant~$z(Z)$, thus solving our problem.


\section{Basics}

In all of the paper, we fix $n \ge 0$.

We consider the group $G = \SL(n+1, \C)$ and its natural action on
forms (homogeneous polynomials) in $n+1$ variables $X_0, \dots, X_n$
by linear substitutions; this action will be on the right:
\[ F(X_0, X_1, \dots, X_n) \cdot (a_{ij})_{0 \le i,j \le n}
    = F\Bigl(\sum_{j=0}^n a_{0j} X_j, \dots, \sum_{j=0}^n a_{nj} X_j\Bigr) \,.
\]
The same action is used for Hermitian forms in $X_0, \dots, X_n$.
A Hermitian form can be considered as a bihomogeneous polynomial of
bidegree~$(1,1)$ in two sets of variables $X_0, \dots, X_n$ and
$\bar{X}_0, \dots, \bar{X}_n$, where the action on the second set
is through the complex conjugate of the matrix. The form~$Q$ is Hermitian
if $Q(\bar{X}; X) = \bar{Q}(X; \bar{X})$, where $\bar{Q}$ denotes
the form obtained from~$Q$ by replacing the coefficients with their
complex conjugates. Hermitian forms can also
be identified with Hermitian matrices, i.e., matrices $A$ such that
$A^\top = \bar{A}$, where $A$ corresponds to~$Q$ if $Q(\xx) = \bar{\xx} A \xx^\top$;
then the action of~$G$ is given by $A \cdot \gamma = \bar{\gamma}^\top A \gamma$.

The group $G$ also acts on coordinates $(\xi_0, \dots, \xi_n)$ on the
right via the contragredient representation,
\[ (\xi_0, \dots, \xi_n) \cdot \gamma = (\xi_0, \dots, \xi_n) \gamma^{-\top} \,. \]
These actions are compatible in the sense that
\[ (Q \cdot \gamma)(\xx \cdot \gamma) = Q(\xx) \]
for Hermitian forms~$Q$ and coordinate vectors~$\xx$.


\section{Point Clusters}

The actions described above induce actions of $\PSL(n+1, \C) = \PGL(n+1, \C)$ on
projective schemes over~$\C$ and points in projective space~$\BP^n(\C)$.
The first specialises and the second generalises to an action on
positive zero-cycles. 

\begin{Definition}
  A {\em positive zero-cycle} or {\em point cluster}
  is a formal sum $Z = \sum_{j=1}^m P_j$ of points $P_j \in \BP^n$.
  The number~$m$ of points is the {\em degree} of~$Z$, written $\deg Z$.
  If $L \subset \BP^n$ is a linear subspace, we let $Z|_L$ be the sum
  of those points in~$Z$ that lie in~$L$.
\end{Definition}

\begin{Definition}
  Let $Z$ be a point cluster in~$\BP^n$.
  \begin{enumerate}
    \item $Z$ is {\em split} if there are two disjoint and nonempty linear subspaces
          $L_1, L_2$ of~$\BP^n$ such that
          $Z = Z|_{L_1} + Z|_{L_2}$. Otherwise, $Z$ is {\em non-split}.
    \item $Z$ is {\em semi-stable} if for every linear subspace
          $L \subset \BP^n$, we have 
          \[ (n+1) \deg Z|_L \le (\dim L + 1)  \deg Z \,. \]
    \item $Z$ is {\em stable} if for every linear subspace
          $\emptyset \neq L \subsetneq \BP^n$, we have
          \[ (n+1) \deg Z|_L < (\dim L + 1) \deg Z \,. \]
  \end{enumerate}
\end{Definition}

\begin{Remark}
  Note that a split point cluster cannot be stable.
  
  If we identify the cluster $Z = \sum_{j=1}^m P_j$, where
  $P_j = (a_{j0} : a_{j1} : \ldots : a_{jn})$, with the form
  $F(Z) = \prod_{j=1}^m (a_{j0} x_0 + a_{j1} x_1 + \ldots + a_{jn} x_n)$
  (up to scaling), then $Z$ is \hbox{(semi-)}stable if and only if
  $F(Z)$ is (semi-)stable in the sense of Geometric Invariant Theory,
  see~\cite{Mumford}.
  
  If $n = 1$, then the notions of stable and semi-stable defined here
  coincide with those defined in~\cite{StollCremona} (in Def.~4.1 and
  before Prop.~5.2) for binary forms.
\end{Remark}

\begin{Definition}
  Let $\CZ_m$ denote the set of point clusters of degree~$m$ in~$\BP^n(\C)$, 
  $\CZ_m^\sstab$ the subset of semi-stable and $\CZ_m^\stab$ the subset
  of stable point clusters. We denote by~$\CZ_m(\R)$ etc.\ the subset of
  point clusters fixed by complex conjugation, which acts via
  $\sum_j P_j \mapsto \sum_j \bar{P}_j$.
\end{Definition}

For notational convenience, we define for a point cluster~$Z$ and $-1 \le k \le n$
\[ \varphi_Z(k) = \max\{\deg Z|_L : L \subset \BP^n \text{\ a $k$-dimensional
                                       linear subspace}\} \,.
\]
Then $Z$ is semi-stable if and only if $\varphi_Z(k) \le \frac{k+1}{n+1} \deg Z$
and stable if and only if the inequality is strict for $0 \le k < n$.

We let $\langle P, P' \rangle = \bar{P} (P')^\top$ denote the standard
Hermitian inner product on row vectors and $\|P\|^2 = \langle P, P \rangle$
the corresponding norm. The next lemma is the basis for most of what
follows.

\begin{Lemma} \label{Lemma:LowerBound}
  Let $Z \in \CZ_m$. Fix row vectors $P_j$, $j \in \{1,\ldots,m\}$, representing
  the points in~$Z$, such that $\|P_j\|^2 = 1$.Then there is a constant $c > 0$
  such that for every positive definite Hermitian matrix~$Q$ with eigenvalues
  $0 < \lambda_0 \le \lambda_1 \le \ldots \le \lambda_n$, we have
  \[ \prod_{j=1}^m \bigl(\bar{P}_j Q P_j^\top\bigr) 
       \ge c \prod_{k=0}^n \lambda_{k}^{\varphi_Z(k)-\varphi_Z(k-1)} \,.
  \]
\end{Lemma}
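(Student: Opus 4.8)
The plan is to diagonalise~$Q$, reduce the inequality to a combinatorial statement about the flag of eigenspaces, and then make the constant uniform by a compactness argument on the variety of complete flags. Write $Q$ in an orthonormal eigenbasis $v_0, \dots, v_n$ with eigenvalues in increasing order; then $\bar{P}_j Q P_j^\top = \sum_{k=0}^n \lambda_k |\langle P_j, v_k\rangle|^2$ and $\sum_{k=0}^n |\langle P_j, v_k\rangle|^2 = \|P_j\|^2 = 1$. For $0 \le k \le n$ put $W_k = \langle v_k, \dots, v_n\rangle$ and $L_k = \langle v_0, \dots, v_k\rangle$, a subspace of projective dimension~$k$ (so $L_{-1} = \emptyset$, $L_n = \BP^n$, and $W_k^\perp = L_{k-1}$), and let $d_k(j) = \|\pi_{W_k}(P_j)\|^2 = \sum_{l \ge k} |\langle P_j, v_l\rangle|^2$ denote the squared length of the orthogonal projection of~$P_j$ onto~$W_k$. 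Since $\lambda_l \ge \lambda_k$ for $l \ge k$ and all the terms are non-negative, this gives the basic pointwise inequality $\bar{P}_j Q P_j^\top \ge \lambda_k\, d_k(j)$, valid for every~$j$ and every~$k$.

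Put $e_k = \varphi_Z(k) - \varphi_Z(k-1)$; these are $\ge 0$ because $\varphi_Z$ is non-decreasing, and $\sum_{k=0}^n e_k = \varphi_Z(n) - \varphi_Z(-1) = m$. For any $\sigma \colon \{1,\dots,m\} \to \{0,\dots,n\}$ with $|\sigma^{-1}(k)| = e_k$ for all~$k$, multiplying the pointwise inequalities over~$j$ gives
\[ \prod_{j=1}^m \bar{P}_j Q P_j^\top \;\ge\; \prod_{j=1}^m \lambda_{\sigma(j)}\, d_{\sigma(j)}(j) \;=\; \Bigl(\prod_{k=0}^n \lambda_k^{e_k}\Bigr) \prod_{j=1}^m d_{\sigma(j)}(j) \,. \]
Hence it is enough to bound $g(\mathcal{F}) := \max_\sigma \prod_{j=1}^m d_{\sigma(j)}(j)$ away from~$0$, where the maximum runs over all such~$\sigma$ and $\mathcal{F}$ denotes the complete flag $W_n \subset \dots \subset W_0$. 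Each $d_k(j)$ depends continuously on~$\mathcal{F}$ (through the projection~$\pi_{W_k}$), so $g$, a maximum of finitely many products of the $d_k(j)$, is continuous on the compact variety of complete flags in~$\C^{n+1}$. It therefore suffices to show $g(\mathcal{F}) > 0$ for \emph{every} flag, i.e.\ that every flag admits an admissible~$\sigma$ with $d_{\sigma(j)}(j) > 0$ for all~$j$; then $c := \min_{\mathcal{F}} g(\mathcal{F})$ works, applied to the eigenflag of any given~$Q$.

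The combinatorial core is this last existence statement. Since $d_k(j) > 0$ exactly when $P_j \notin W_k^\perp = L_{k-1}$, setting $\rho(j) = \min\{k : P_j \in L_k\}$ turns the requirement $d_{\sigma(j)}(j) > 0$ into $\sigma(j) \le \rho(j)$. Finding an admissible~$\sigma$ with $\sigma(j) \le \rho(j)$ for all~$j$ is a marriage problem: join point~$j$ to a colour-$k$ slot whenever $k \le \rho(j)$, there being $e_k$ slots of colour~$k$ and $\sum_k e_k = m$ slots in all. Since the worst test set of slots is the set of all slots of colours~$\ge K$, Hall's condition on the slot side reads $\sum_{k \ge K} e_k \le |\{j : \rho(j) \ge K\}|$ for every~$K$; here the left-hand side equals $m - \varphi_Z(K-1)$, while the right-hand side equals $m - \deg Z|_{L_{K-1}} \ge m - \varphi_Z(K-1)$ by the very definition of~$\varphi_Z$. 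So Hall's condition holds, a matching exists for every flag, and the lemma follows with $c = \min_{\mathcal{F}} g(\mathcal{F}) > 0$.

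I expect the combinatorial step to be the real substance: the exponents $\varphi_Z(k) - \varphi_Z(k-1)$ are exactly what Hall's theorem extracts from the trivial inequalities $\deg Z|_L \le \varphi_Z(\dim L)$. The diagonalisation and the pointwise bound $\bar{P}_j Q P_j^\top \ge \lambda_k d_k(j)$ are routine; the only points needing a little care in the compactness step are that $g$ genuinely depends on the flag alone (not on the choice of eigenbasis inside eigenspaces) and that it is legitimate to pass to arbitrary flags, since the displayed bound is only ever applied to the eigenflag of~$Q$.
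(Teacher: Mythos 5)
Your proof is correct and takes essentially the same route as the paper: diagonalise $Q$, bound each factor $\bar P_j Q P_j^\top$ below by $\lambda_k$ times the squared projection onto a coordinate subspace according to some combinatorial assignment of points to flag levels, and use compactness of the space of unitary flags to obtain a uniform constant. The only substantive difference is in the combinatorial step: the paper works with permutations $\sigma\in S_m$ and simply asserts that the set $\Sigma(B)$ of admissible ones is nonempty ``by definition of~$\varphi_Z$'' (this can be seen by sorting the points $P_j$ by the smallest flag level containing them), whereas you phrase it as a bipartite matching problem and invoke Hall's theorem, with the bound $\deg Z|_{L_{K-1}}\le\varphi_Z(K-1)$ furnishing Hall's condition. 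Your version makes explicit what the paper leaves terse, at the modest cost of invoking a theorem where a one-line sorting argument suffices; your use of the complete flag variety rather than $\U(n+1)$ is also a harmless cosmetic change, since the function descends to the flag variety in both cases.
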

\begin{proof}
  Let $B = b_0, \ldots, b_n$ be a unitary basis of~$\C^{n+1}$.
  Let $E_k = \langle b_0, \ldots, b_k \rangle$ the subspace generated
  by the first $k+1$ basis vectors. By definition of~$\varphi_Z$, the
  set $\Sigma(B) \subset S_m$ of permutations~$\sigma$ with the following
  property is nonempty:
  \[ P_{\sigma(j)} \notin E_k \quad\text{if}\quad j > \varphi_Z(k) \,. \]
  Define $k_\sigma(j) = \min\{k : \sigma(j) \le \varphi_Z(k)\}$; then
  $P_{\sigma(j)} \notin E_{k_\sigma(j)-1}$ if $\sigma \in \Sigma(B)$. Write
  $P_j = \sum_{i=0}^n \xi_{ji} b_i$ and define 
  \[ f_{\sigma}(B)
       = \prod_{j=1}^m \Bigl(\sum_{i=k_\sigma(j)}^n |\xi_{\sigma(j),i}|^2\Bigr)
       = \prod_{j=1}^m \Bigl(\sum_{i=k_\sigma(j)}^n
                             |\langle P_{\sigma(j)}, b_i \rangle|^2\Bigr)
  \]
  and
  \[ f(B) = \max\{f_{\sigma}(B) : \sigma \in S_m\} \,. \]
  It is clear that $f_\sigma$ is continuous on the set of unitary bases
  and that $f_\sigma(B) > 0$ if $\sigma \in \Sigma(B)$. This implies
  that $f$ is continuous and positive. Since
  the set of all unitary bases (i.e., $\U(n+1)$) is compact, there is
  some $c > 0$ such that $f(B) \ge c$ for all~$B$.
  
  Now let $Q$ be a positive definite Hermitian matrix as in the statement
  of the Lemma. Let $B = b_0, \dots, b_n$ be a unitary basis of eigenvectors
  such that $b_j Q = \lambda_j b_j$. We then have for $\sigma \in S_m$
  and using notation introduced above
  \begin{align*}
    \prod_{j=1}^m \bigl(\bar{P}_j Q P_j^\top\bigr)
      &= \prod_{j=1}^m \bigl(\bar{P}_{\sigma(j)} Q P_{\sigma(j)}^\top\bigr)
       =  \prod_{j=1}^m \Bigl(\sum_{i=0}^n \lambda_i |\xi_{\sigma(j),i}|^2\Bigr) \\
      &\ge \prod_{j=1}^m \Bigl(\lambda_{k_\sigma(j)}
                                \sum_{i=k_\sigma(j)}^n |\xi_{\sigma(j),i}|^2\Bigr) \\
      &= f_\sigma(B) \prod_{j=1}^m \lambda_{k_\sigma(j)}
       = f_\sigma(B) \prod_{k=0}^n \lambda_{k}^{\varphi_Z(k)-\varphi_Z(k-1)} \,.
  \end{align*}
  Taking the maximum over all $\sigma \in S_m$ now shows that
  \[ \prod_{j=1}^m \bigl(\bar{P}_j Q P_j^\top\bigr)
       \ge f(B) \prod_{k=0}^n \lambda_{k}^{\varphi_Z(k)-\varphi_Z(k-1)}
       \ge c \prod_{k=0}^n \lambda_{k}^{\varphi_Z(k)-\varphi_Z(k-1)} \,.
  \]
\end{proof}


\section{The Covariant}

\begin{Definition}
  Let $\tilde{\CZ}_m$ etc.\ denote the set of point clusters of degree~$m$
  with a choice of coordinates for the points, up to scaling the coordinates
  of the points with factors whose product is~1. We will call
  $\tilde{Z} \in \tilde{\CZ}_m$ a {\em point cluster with scaling}.
  
  For $\lambda \in \C^\times$ and $\tilde{Z} \in \tilde{\CZ}_m$, we write
  $\lambda \tilde{Z}$ for the cluster with scaling that we obtain by
  scaling one of the points in~$\tilde{Z}$ by~$\lambda$. This defines an
  action of~$\C^\times$ on~$\tilde{\CZ}_m$ such that the quotient
  $\C^\times \backslash \tilde{\CZ}_m$ is $\CZ_m$. If $\tilde{Z} \in \tilde{\CZ_m}$,
  then we write $Z$ for the image of~$\tilde{Z}$ in~$\CZ_m$. 
\end{Definition}

\begin{Definition}
  For a point cluster with scaling $\tilde{Z} \in \tilde{\CZ}_m$,
  pick a representative $\sum_{j=1}^m P_j$ with row vectors~$P_j$.
  Then, for $Q \in \HH_{n,\C}$, represented by a Hermitian matrix, we
  define
  \[ D_{\tilde{Z}}(Q)
       = D(\tilde{Z}, Q)
       = \sum_{j=1}^m \log (\bar{P}_j Q P_j^\top) - \frac{m}{n+1} \log \det Q \,.
  \]
\end{Definition}

$D(\tilde{Z},Q)$ is clearly invariant under scaling of~$Q$, and it does
not depend on the choice of representative for~$\tilde{Z}$.
Note also that for $\gamma \in G$,
\[ D(\tilde{Z} \cdot \gamma, Q \cdot \gamma) = D(\tilde{Z}, Q) \,. \]
Furthermore, we have $D(\bar{\tilde{Z}}, \bar{Q}) = D(\tilde{Z}, Q)$
and $D(\lambda \tilde{Z}, Q) = \log |\lambda|^2 + D(\tilde{Z}, Q)$.

This function generalises the distance function used in Prop.~5.3
of~\cite{StollCremona}. We will now proceed to show that for stable
clusters, there is a unique form $Q \in \HH_{n,\C}$ that minimises
this distance.

To that end, we now identify $\HH_{n,\C}$ with the set of positive definite
Hermitian matrices of determinant~$1$. This is a real $n(n+2)$-dimensional
submanifold of the space of all complex $(n+1) \times (n+1)$-matrices.
$\SL(n+1, \C)$ acts transitively on this space,
and the tangent space~$T$ at the identity matrix~$I$ consists of the
Hermitian matrices of
trace zero. We say that a twice continuously differentiable function
on~$\HH_{n,\C}$ is {\em convex} if its second derivative is positive
semidefinite, and {\em strictly convex} if its second derivative is
positive definite. Then the usual conclusions on convex functions apply.

\begin{Lemma} \label{Lemma:DZProp}
  Let $\tilde{Z} \in \tilde{\CZ}_m$ be a point cluster with scaling.
  \begin{enumerate}
    \item The function $D_{\tilde{Z}}$ is convex.
    \item If $Z$ is non-split, then $D_{\tilde{Z}}$ is strictly convex.
    \item If $Z$ is semi-stable, then $D_{\tilde{Z}}$ is bounded from below.
    \item If $Z$ is stable, then the sets
          $\{Q \in \HH_{n,\C} : D_{\tilde{Z}}(Q) \le B\}$ are compact for all 
          $B \in \R$.
  \end{enumerate}
\end{Lemma}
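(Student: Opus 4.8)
The plan is to reduce everything to a one–variable computation along the geodesics of $\HH_{n,\C}$ and then feed in Lemma~\ref{Lemma:LowerBound}. On $\HH_{n,\C}$ one has $\det Q = 1$, so $D_{\tilde Z}(Q) = \sum_{j=1}^m \log(\bar P_j Q P_j^\top)$. The notions ``split'', ``semi-stable'', ``stable'' are $\PGL(n+1,\C)$-invariant, and by the equivariance $D(\tilde Z\cdot\gamma, Q\cdot\gamma) = D(\tilde Z, Q)$ together with the transitivity of $\SL(n+1,\C)$ on $\HH_{n,\C}$ (which acts by isometries of the symmetric space, hence maps geodesics to geodesics), it suffices for parts (1) and (2) to prove the assertions at the identity matrix $I$, i.e.\ along the curves $t\mapsto\exp(tH)$ with $H$ a trace-zero Hermitian matrix. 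Fixing a unitary basis diagonalising $H$, with real eigenvalues $\mu_0,\dots,\mu_n$ (so $\sum_i\mu_i = 0$), one gets $\bar P_j\exp(tH)P_j^\top = \sum_{i=0}^n a_{ji}\,e^{t\mu_i}$ with $a_{ji}\ge 0$, where $a_{ji}\ne 0$ exactly when $P_j$ has a nonzero component in the $\mu_i$-eigenspace. Then each summand $g_j(t) = \log\bigl(\sum_i a_{ji}e^{t\mu_i}\bigr)$ satisfies, with $\phi_j(t) = \sum_i a_{ji}e^{t\mu_i}$,
\[ g_j''(t) = \frac{1}{2\,\phi_j(t)^2}\sum_{i,i'} a_{ji}a_{ji'}(\mu_i-\mu_{i'})^2 e^{t(\mu_i+\mu_{i'})} \;\ge\; 0, \]
so $D_{\tilde Z}\bigl(\exp(tH)\bigr) = \sum_j g_j(t)$ is convex, giving (1). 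Moreover $g_j''(0) = 0$ precisely when $\mu_i = \mu_{i'}$ for all $i,i'$ with $a_{ji}a_{ji'}\ne 0$, i.e.\ when $P_j$ lies in a single eigenspace of $H$. If the Hessian of $D_{\tilde Z}$ at $I$ degenerates in some direction $H\ne 0$, this holds for every $j$; since $H\ne 0$ has at least two distinct eigenvalues and each eigenspace is a proper subspace of $\C^{n+1}$, the pairwise orthogonal (hence projectively disjoint) eigenspaces of $H$ that meet the $P_j$ exhibit $Z$ as split — and if all $P_j$ lie in one proper eigenspace $V$, take that linear subspace together with any disjoint nonempty one. Hence $Z$ non-split implies $D_{\tilde Z}$ strictly convex, which is (2).

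For (3): since $D(\lambda\tilde Z, Q) = \log|\lambda|^2 + D(\tilde Z,Q)$, we may rescale so that $\|P_j\|^2 = 1$ and apply Lemma~\ref{Lemma:LowerBound}. With $0<\lambda_0\le\dots\le\lambda_n$ the eigenvalues of $Q$ and $t_k = \log\lambda_k$ (so $t_0\le\dots\le t_n$ and $\sum_k t_k = 0$),
\[ D_{\tilde Z}(Q) \;\ge\; \log c + \sum_{k=0}^n\bigl(\varphi_Z(k)-\varphi_Z(k-1)\bigr)t_k \;=\; \log c + m\,t_n + \sum_{k=0}^{n-1}\varphi_Z(k)\,(t_k - t_{k+1}), \]
the last equality by Abel summation using $\varphi_Z(-1)=0$ and $\varphi_Z(n)=m$. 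Now $t_k-t_{k+1}\le 0$ and $\varphi_Z(k)\le\frac{k+1}{n+1}m$ by semi-stability, while $\sum_{k=0}^{n-1}(k+1)(t_k - t_{k+1}) = -(n+1)t_n$ (another elementary Abel summation, using $\sum_k t_k = 0$), so the right-hand side is $\ge\log c + m\,t_n - m\,t_n = \log c$. Thus $D_{\tilde Z}$ is bounded below.

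For (4): a stable $Z$ is semi-stable, so by (3) $D_{\tilde Z}$ is bounded below and, being continuous, it only remains to show its sublevel sets are bounded. Put $\delta = \min_{0\le k<n}\bigl(\frac{k+1}{n+1}m - \varphi_Z(k)\bigr) > 0$. Repeating the computation of (3) with $\varphi_Z(k)\le\frac{k+1}{n+1}m - \delta$ and $\sum_{k=0}^{n-1}(t_k-t_{k+1}) = t_0 - t_n$ gives
\[ D_{\tilde Z}(Q) \;\ge\; \log c + \delta\,(t_n - t_0) \;=\; \log c + \delta\log(\lambda_n/\lambda_0). \]
Hence on $\{D_{\tilde Z}\le B\}$ the ratio $\lambda_n/\lambda_0$ is bounded; combined with $\lambda_0\cdots\lambda_n = 1$ this confines all eigenvalues to a fixed compact subinterval of $(0,\infty)$, so the sublevel set lies in a compact set of Hermitian matrices and, being closed, is compact.

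The substantive input is Lemma~\ref{Lemma:LowerBound}; the rest is bookkeeping. The points demanding the most care are the equality analysis identifying the degeneracy locus of the Hessian with precisely the split clusters (the equality case of the Cauchy--Schwarz-type estimate for $g_j''$), getting the two Abel summations right, and checking that the reduction to the base point $I$ is legitimate for the notion of geodesic convexity in use.
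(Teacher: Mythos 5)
Your proof is correct and follows essentially the same route as the paper: reduce to the Hessian at $I$ along curves $t\mapsto\exp(tH)$, analyse the equality case to get strict convexity for non-split $Z$, and feed Lemma~\ref{Lemma:LowerBound} through the same Abel summation (combined with the semi-stability, resp.\ stability bounds on $\varphi_Z$) to get the lower bound and compactness of sublevel sets. The only cosmetic differences are that you compute $g_j''$ directly from the spectral decomposition of $H$ rather than via the power series of $\log$ as in the paper, and that in~(4) you use $\delta=\min_k\bigl(\tfrac{k+1}{n+1}m-\varphi_Z(k)\bigr)$ whereas the paper exploits integrality to take $\delta=\tfrac{1}{n+1}$ explicitly; neither affects the validity of the argument.
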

\begin{proof}
  Since scaling~$\tilde{Z}$ only changes $D_{\tilde{Z}}$ by an additive
  constant, we can assume that $\tilde{Z} = P_1 + \ldots + P_m$ with
  row vectors~$P_j$ satisfying $\|P_j\|^2 = 1$.
  
  (1) Since $D_{\tilde{Z}}(Q \cdot \gamma) =  D_{\tilde{Z} \cdot \gamma^{-1}}(Q)$,
      we can assume
      that $Q = I$. We compute the second derivative at $\lambda = 0$ of 
      $\lambda \mapsto f(\lambda) = D_{\tilde{Z}}\bigl(\exp(\lambda B)\bigr)$,
      where $B$ is a Hermitian trace-zero matrix (i.e., $B \in T$). We have
      \begin{align*}
        D_{\tilde{Z}}\bigl(\exp(\lambda B)\bigr)
          &= \sum_j \log(1 + \bar{P}_j B P_j^\top \cdot \lambda
                    + \bar{P_j} B^2 P_j^\top \cdot \lambda^2/2 + \ldots \;) \\
          &= \sum_j \left(\bar{P}_j B P_j^\top \cdot \lambda
              + (\bar{P}_j B^2 P_j^\top - (\bar{P}_j B P_j^\top)^2) \cdot \lambda^2/2 
              + \ldots \; \right)
      \end{align*}
      The second derivative therefore is
      \[ \sum_j \left(\bar{P}_j B^2 P_j^\top - (\bar{P}_j B P_j^\top)^2\right)
           = \sum_j \left(\|P_j \bar{B}\|^2 \|P_j\|^2
                           - |\langle P_j \bar{B}, P_j \rangle|^2\right)
           \ge 0
      \]
      by the Cauchy-Schwarz inequality.
      This shows that the second derivative is positive semidefinite,
      whence the first claim.
  
  (2) The second derivative vanishes
      exactly when every $P_j$ is an eigenvector of~$B$ for all~$j$. 
      Since $B$ has trace zero, there are at least two distinct
      eigenvalues (unless $B$ is the zero matrix). The `non-split' condition
      therefore excludes this possibility, implying that the second
      derivative at~$I$ is positive definite. Since the condition is
      invariant under the action of~$\SL(n+1, \C)$, the second derivative
      is positive definite everywhere.
  
  (3) By Lemma~\ref{Lemma:LowerBound}, we find some $c > 0$ such that
      for $Q \in \HH_{n,\C}$ with eigenvalues 
      $\lambda_0 \le \ldots \le \lambda_n$, we have
      \[ \prod_{j=1}^m \bigl(\bar{P}_j Q P_j^\top\bigr)
           \ge c \prod_{k=0}^n \lambda_k^{\varphi_Z(k)-\varphi_Z(k-1)} \,.
      \]
      With $\varphi_Z(k) \le (k+1)\frac{m}{n+1}$, we obtain
      \begin{align*}
        D_{\tilde{Z}}(Q)
          &\ge \log c
                + \sum_{k=0}^n
                    \left(\varphi_Z(k)-\varphi_Z(k-1)\right) \log \lambda_k \\
          &= \log c + m\,\log \lambda_n
              - \sum_{k=1}^n \varphi_Z(k-1)
                              (\log \lambda_k - \log \lambda_{k-1}) \\
          &\ge \log c + m\,\log \lambda_n
                - \frac{m}{n+1} 
                    \sum_{k=1}^n k (\log \lambda_k - \log \lambda_{k-1}) \\
          &= \log c
                + \frac{m}{n+1} \sum_{k=0}^n \log \lambda_k \\
          &= \log c
      \end{align*}
      (recall that $\sum_k \log \lambda_k = \log \det Q = 0$).
  
  (4) We now use that $\varphi_Z(k) \le (k+1)\frac{m}{n+1} - \frac{1}{n+1}$
      for $0 \le k \le n-1$. The computation in the proof of~(3) above then
      yields
      \begin{align*}
        D_{\tilde{Z}}(Q)
          &\ge \log c + m\,\log \lambda_n
              - \sum_{k=1}^n \varphi_Z(k-1)
                              (\log \lambda_k - \log \lambda_{k-1}) \\
          &\ge \log c + m\,\log \lambda_n \\
          &\qquad {} - \frac{m}{n+1} 
                         \sum_{k=1}^n k (\log \lambda_k - \log \lambda_{k-1})
              + \frac{1}{n+1} \sum_{k=1}^n (\log \lambda_k - \log \lambda_{k-1}) \\
          &= \log c + \frac{1}{n+1} (\log \lambda_n - \log \lambda_0) \,.
      \end{align*}
      So $D_{\tilde{Z}}(Q) \le B$ implies that $\lambda_n/\lambda_0$ is bounded,
      but this implies that the subset of $Q \in \HH_{n,\C}$ satisfying
      $D_{\tilde{Z}}(Q) \le B$ is also bounded. Since it is obviously closed, it
      must be compact.
\end{proof}

\begin{Remark}
  Note that if $Z$ is not stable, then there are sets $\{Q : D_{\tilde{Z}}(Q) \le B\}$
  that are not compact. Indeed, there is a linear subspace~$L_0 \subset \C^{n+1}$ 
  of some dimension
  $0 < k+1 < n+1$ containing at least $(k+1)m/(n+1)$ points of~$Z$. Let $L_1$ be its
  orthogonal complement. Let $Q_\lambda$ be the Hermitian matrix with
  eigenvalue $\lambda^{-(n-k)}$ on~$L_0$ and eigenvalue $\lambda^{k+1}$
  on~$L_1$. Then we have for $\lambda \ge 1$ that
  \[ D_{\tilde{Z}}(Q_\lambda)
      \le \text{const.} + (k+1)\frac{m}{n+1}\,\log \lambda^{-(n-k)}
                        + (n-k)\frac{m}{n+1}\,\log \lambda^{k+1}
      = \text{const.} \,;
  \]
  but the set $\{Q_\lambda : \lambda \ge 1\}$ is not relatively compact.
  
  We also see that $D_{\tilde{Z}}$ is not bounded from below
  when $Z$ is not semi-stable, since using the corresponding strict
  inequality, we find with a similar argument that
  \[ D_{\tilde{Z}}(Q_\lambda) \le \text{const.} - \varepsilon \log \lambda \]
  for some $\varepsilon > 0$.
\end{Remark}

\begin{Corollary} \label{Cor:ztheta}
  If $\tilde{Z} \in \tilde{\CZ}_m^\stab$, then the function $D_{\tilde{Z}}$
  has a unique critical
  point $z(Z)$ on~$\HH_{n,\C}$, and at this point $D_{\tilde{Z}}$ achieves its
  global minimum~$\log \theta(\tilde{Z})$ (for some $\theta(\tilde{Z}) \in \R_{>0}$).
\end{Corollary}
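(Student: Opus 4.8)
The plan is to read the statement off Lemma~\ref{Lemma:DZProp}. First I would record that a stable cluster is in particular semi-stable, and that it is non-split: by the Remark following the definition of (semi-)stability a split cluster is never stable. Hence parts~(2), (3) and~(4) of Lemma~\ref{Lemma:DZProp} all apply to~$\tilde{Z}$, so that $D_{\tilde{Z}}$ is strictly convex, bounded from below, and each of its sublevel sets $\{Q \in \HH_{n,\C} : D_{\tilde{Z}}(Q) \le B\}$ is compact. (Only parts~(2) and~(4) will actually be used.)

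Next I would produce the global minimum. With $B_0 = D_{\tilde{Z}}(I)$, the set $S = \{Q : D_{\tilde{Z}}(Q) \le B_0\}$ contains $I$, hence is nonempty, and it is compact by Lemma~\ref{Lemma:DZProp}(4). Since $D_{\tilde{Z}}$ is continuous it attains its infimum over~$S$ at some $Q_0$, and as $D_{\tilde{Z}}(Q) > B_0 \ge D_{\tilde{Z}}(Q_0)$ for $Q \notin S$, this $Q_0$ is a global minimum of $D_{\tilde{Z}}$ on all of~$\HH_{n,\C}$. Because $D_{\tilde{Z}}$ is smooth (a sum of logarithms of the positive smooth functions $Q \mapsto \bar{P}_j Q P_j^\top$ minus $\frac{m}{n+1}\log\det Q$) and $\HH_{n,\C}$ is a manifold without boundary, its differential vanishes at~$Q_0$, so $Q_0$ is a critical point. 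I then set $z(Z) = Q_0$ and $\theta(\tilde{Z}) = \exp\bigl(D_{\tilde{Z}}(Q_0)\bigr) \in \R_{>0}$.

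It remains to show that $Q_0$ is the only critical point (which simultaneously re-proves that every critical point is the global minimum). Let $Q_1$ be an arbitrary critical point. Since $\SL(n+1,\C)$ acts transitively on $\HH_{n,\C}$, choose $\delta \in \SL(n+1,\C)$ with $I \cdot \delta = Q_0$, and write $Q_1 \cdot \delta^{-1} = \exp(B)$ with $B \in T$ a trace-zero Hermitian matrix, which is possible because $\exp$ maps $T$ onto~$\HH_{n,\C}$. Then $t \mapsto \gamma(t) = \exp(tB) \cdot \delta$ satisfies $\gamma(0) = Q_0$, $\gamma(1) = Q_1$, and $g(t) = D_{\tilde{Z}}(\gamma(t))$ is, by the definition of convexity used in the excerpt together with $G$-equivariance of $D$ and of the non-split condition, strictly convex on~$[0,1]$ by Lemma~\ref{Lemma:DZProp}(2). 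Hence $g'$ is strictly increasing; but $g'(0) = 0$ since $Q_0$ is a minimum and $g'(1) = 0$ since $Q_1$ is critical, which is impossible unless the curve is constant, i.e.\ $Q_1 = Q_0$.

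The only point that needs care is the identification of the one-parameter families $t \mapsto \exp(tB)\cdot\delta$ — along which Lemma~\ref{Lemma:DZProp}(2) asserts strict convexity — with curves joining any prescribed pair of points of the symmetric space~$\HH_{n,\C}$; granting this, the argument reduces to the elementary fact that a strictly convex function of one real variable has at most one point with vanishing derivative, and that such a point, when it exists, is its global minimum.
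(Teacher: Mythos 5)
Your proof is correct and follows essentially the same route as the paper: existence of a global minimum from compactness of sublevel sets, and uniqueness of the critical point from strict convexity along the one-parameter families $t\mapsto\exp(tB)\cdot\delta$. Your version of the uniqueness step (derivative of a strictly convex one-variable function cannot vanish twice along a geodesic) is a slightly more explicit rendering of the paper's argument about a local maximum on a path joining two local minima; the "point that needs care" you flag at the end is indeed exactly what Lemma~\ref{Lemma:DZProp}(1)--(2) establish, since those parts define convexity precisely via the curves $\lambda\mapsto\exp(\lambda B)$ and propagate positive-definiteness by $G$-invariance, so nothing is left to grant.

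The one small omission: the corollary writes $z(Z)$, not $z(\tilde Z)$, so it implicitly claims the minimiser is independent of the choice of scaling. The paper closes its proof by noting $D_{\lambda\tilde Z}=\log|\lambda|^2+D_{\tilde Z}$, hence the two functions have the same critical point and only $\theta$ picks up the factor $|\lambda|^2$. You should add that one line to justify the notation.
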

\begin{Proof}
  By Lemma~\ref{Lemma:DZProp}, we know that $D_{\tilde{Z}}$ is strictly convex
  and that the set $\{Q \in \HH_{n,\C} : D_{\tilde{Z}}(Q) \le B\}$ is always compact.
  The first property 
  implies that every critical point must be a local minimum. By the second
  property, there exists a global minimum. If there were two distinct
  local minima, then on a path joining the two, there would have to
  be a local maximum, but then the second derivative would not be
  positive definite in this point, a contradiction. Hence there is
  a unique local minimum, which must then also be the global minimum
  and the unique critical point.
  
  Since $D_{\lambda \tilde{Z}} = \log |\lambda|^2 + D_{\tilde{Z}}$, the
  minimising point in~$\HH_{n,\C}$ does not depend on the scaling, so it
  only depends on~$Z$, and the notation $z(Z)$ is justified.
\end{Proof}

Note that we have $\theta(\lambda \tilde{Z}) = |\lambda|^2 \theta(\tilde{Z})$.

Corollary~\ref{Cor:ztheta} defines $z : \CZ_m^\stab \to \HH_{n,\C}$ and
$\theta : \tilde{\CZ}_m^\stab \to \R_{> 0}$. The latter extends to
\[ \theta : \tilde{\CZ}_m \To \R_{> 0} \]
with the definition
$\theta(\tilde{Z}) = \inf_{Q \in \HH_{n,\C}} \exp\bigl(D(\tilde{Z},Q)\bigr)$. 
By Lemma~\ref{Lemma:DZProp},~(3), we have $\theta(\tilde{Z}) > 0$ if
$\tilde{Z} \in \tilde{\CZ}_m^\sstab$, and by the preceding remark,
$\theta(\tilde{Z}) = 0$ if $\tilde{Z}$ is not semi-stable.

\begin{Corollary}
  The function $z : \CZ_m^\stab \to \HH_{n,\C}$ is $\SL(n+1, \C)$-equivariant.
  It also satisfies $z(\bar{Z}) = \overline{z(Z)}$.
  In particular, $z$ restricts to $z : \CZ_m^\stab(\R) \to \HH_{n,\R}$.
  
  The function $\theta : \tilde{\CZ}_m \to \R_{> 0}$ is invariant
  under $\SL(n+1, \C)$ and under complex conjugation.
\end{Corollary}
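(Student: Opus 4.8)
The plan is to deduce everything from the invariance identities for~$D$ recorded just before Corollary~\ref{Cor:ztheta}, combined with the uniqueness of the minimiser established there. First I would observe that both the $\SL(n+1,\C)$-action on~$\BP^n$ and the action of complex conjugation carry a $k$-dimensional linear subspace to a $k$-dimensional linear subspace and respect incidence of points with subspaces; hence $\varphi_{Z \cdot \gamma} = \varphi_Z = \varphi_{\bar{Z}}$, so that the classes $\CZ_m^\sstab$ and $\CZ_m^\stab$ are preserved by both operations. In particular $z(Z \cdot \gamma)$ and $z(\bar{Z})$ are defined whenever $z(Z)$ is, and likewise $\theta$ is unaffected by these preliminary concerns.

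For equivariance, fix $\gamma \in G$ and a representative $\tilde{Z}$ of $Z \in \CZ_m^\stab$. From $D(\tilde{Z} \cdot \gamma, Q \cdot \gamma) = D(\tilde{Z}, Q)$ I get $D_{\tilde{Z} \cdot \gamma}(Q) = D_{\tilde{Z}}(Q \cdot \gamma^{-1})$ for all $Q \in \HH_{n,\C}$. Since $Q \mapsto Q \cdot \gamma^{-1}$ is a bijection of $\HH_{n,\C}$ onto itself, the function $D_{\tilde{Z} \cdot \gamma}$ attains its (unique, by Corollary~\ref{Cor:ztheta}) minimum at a point $Q$ if and only if $D_{\tilde{Z}}$ attains its minimum at $Q \cdot \gamma^{-1}$. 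As these minimisers are $z(Z \cdot \gamma)$ and $z(Z)$ respectively, we conclude $z(Z \cdot \gamma) \cdot \gamma^{-1} = z(Z)$, i.e.\ $z(Z \cdot \gamma) = z(Z) \cdot \gamma$.

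For the behaviour under conjugation I argue identically, using $D(\bar{\tilde{Z}}, \bar{Q}) = D(\tilde{Z}, Q)$, i.e.\ $D_{\bar{\tilde{Z}}}(Q) = D_{\tilde{Z}}(\bar{Q})$, together with the fact that $Q \mapsto \bar{Q}$ is an involution of $\HH_{n,\C}$ (the conjugate of a positive definite Hermitian matrix of determinant~$1$ is again one). This gives $\overline{z(\bar{Z})} = z(Z)$, that is, $z(\bar{Z}) = \overline{z(Z)}$. If $Z \in \CZ_m^\stab(\R)$, then $\bar{Z} = Z$, so $z(Z) = \overline{z(Z)}$; since $\HH_{n,\R}$ is precisely the fixed locus of conjugation inside $\HH_{n,\C}$, it follows that $z(Z) \in \HH_{n,\R}$, so $z$ restricts to $\CZ_m^\stab(\R) \to \HH_{n,\R}$ as claimed.

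Finally, for $\theta : \tilde{\CZ}_m \to \R_{>0}$ the same two changes of variable apply to the infimum rather than to a minimiser: $\theta(\tilde{Z} \cdot \gamma) = \inf_Q \exp D(\tilde{Z} \cdot \gamma, Q) = \inf_Q \exp D(\tilde{Z}, Q \cdot \gamma^{-1}) = \theta(\tilde{Z})$ because $Q \mapsto Q \cdot \gamma^{-1}$ is a bijection of $\HH_{n,\C}$, and similarly $\theta(\bar{\tilde{Z}}) = \inf_Q \exp D(\tilde{Z}, \bar{Q}) = \theta(\tilde{Z})$ because $Q \mapsto \bar{Q}$ is a bijection. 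I do not expect a genuine obstacle here: the entire content is the uniqueness of the minimiser plus the equivariance of~$D$, and the only points requiring (minor) care are checking that the relevant (semi-)stability condition is preserved by the two operations so that $z$ and $\theta$ are defined on the transformed clusters, and that $Q \mapsto Q \cdot \gamma^{-1}$ and $Q \mapsto \bar{Q}$ genuinely map $\HH_{n,\C}$ to itself.
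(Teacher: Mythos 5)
Your proof is correct and follows the same route as the paper: deduce equivariance of $z$ from the invariance identities for $D$ together with uniqueness of the minimiser, and deduce invariance of $\theta$ from those same identities applied to the infimum. The paper states this in a single sentence; you have merely spelled out the change-of-variables argument and the (true, and worth noting) preliminary observations that stability is preserved and that the two operations are bijections of $\HH_{n,\C}$.
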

\begin{Proof}
  The first statement follows from the invariance of~$D$ (under the action
  of both $\SL(n+1, \C)$
  and complex conjugation) and the uniqueness of~$z(Z)$. The second
  statement follows from the invariance of~$D$.
\end{Proof}

In some cases the point $z(Z)$ is uniquely determined by symmetry considerations.
Namely if the point cluster~$Z \in \CZ_m^\stab$ is stabilised by a
subgroup of~$\SL(n+1, \C)$ that
fixes a unique point in~$\HH_{n,\C}$, then $z(Z)$ must be this point,
compare Lemma~3.1 in~\cite{StollCremona}. This facilitates the
numerical computation of~$z(Z)$, since it eliminates the need for
finding numerically the minimum of the distance function on~$\HH_{n,\C}$.

\begin{Example}
  Consider a sum~$Z$ of $n+2$ points in general position in~$\BP^n(\C)$.
  Then $Z$ is stable. Since $\PGL(n+1, \C)$
  acts transitively on $(n+2)$-tuples of points in general position,
  we can assume that the points in~$Z$ are the coordinate points together with
  the point $(1 : \ldots : 1)$. Let this specific cluster be~$Z_0$.
  The stabiliser of~$Z_0$ in~$\PGL(n+1)$ is
  isomorphic to the symmetric group~$S_{n+2}$; its preimage $\Gamma$ in~$\SL(n+1,\C)$
  acts irreducibly on~$\C^{n+1}$. By Schur's lemma, there is a unique
  (up to scaling) $\Gamma$-invariant positive definite Hermitian form.
  It can be checked that
  \[ Q_0(x_0, \dots, x_n)
      = \sum_{i=0}^n |x_i|^2 + \sum_{0 \le i < j \le n} |x_i-x_j|^2
      = (n+2) \sum_{i=0}^n |x_i|^2 - \Bigl| \sum_{i=0}^n x_i \Bigr|^2
  \]
  is invariant under~$\Gamma$, hence $z(Z_0) = Q_0$. In general, we just
  have to find a matrix $\gamma$ such that $Z_0 \cdot \gamma^{-\top} = Z$;
  then
  \[ z(Z) = z(Z_0 \cdot \gamma^{-\top}) = Q_0 \cdot \gamma^{-\top} \,. \]
  Note that
  $Z_0 \cdot \gamma^{-\top} = \sum_j P_{0,j} \gamma$ if $Z_0 = \sum_j P_{0,j}$ and
  we think of the $P_{0,j}$ as row vectors. So if $Z = \sum_j P_j$, then
  the rows of~$\gamma$ are coordinate vectors for the first $n+1$ points
  in~$Z$, scaled in such a way that their sum is a coordinate vector for
  the last point.
\end{Example}


\section{Reduction of Point Clusters}

We can now define when a point cluster is reduced.

\begin{Definition}
  Let $Z \in \CZ_m^\stab(\R)$. We say that $Z$ is {\em LLL-reduced},
  resp., {\em Minkowski-reduced}, if the positive definite real quadratic
  form corresponding to~$z(Z)$ is LLL-reduced, resp., Minkowski-reduced.
\end{Definition}

By definition, there is an essentially unique Minkowski-reduced representative
in the $\SL(n+1, \Z)$-orbit of a given point cluster $Z \in \CZ_m^\stab(\R)$.
On the other hand, for computational purposes, it is usually more convenient
to work with LLL-reduced representatives. In order to find an LLL-reduced
representative of $Z$'s orbit, we compute the covariant $Q = z(Z)$. Then we
use the LLL algorithm~\cite{LLL} to find $\gamma \in \SL(n+1, \Z)$ such that
$Q \cdot \gamma$ is LLL-reduced. Then $Z \cdot \gamma$ is an LLL-reduced
representative of the orbit of~$Z$.

\begin{Example}
  We can use our results to reduce pencils of quadrics in three variables
  whose generic member is smooth. These correspond to four points in general
  position in~$\BP^2$. We illustrate the method with a concrete example.
  Let
  \begin{align*}
    Q_1(x,y,z)
      &= 857211194051 x^2 - 10879213981695 x y - 1296007209476 x z \\
      & \qquad {} + 34518126244996 y^2 + 8224075847095 y z + 489854396055 z^2 \\
    Q_2(x,y,z)
      &= 2274418654562 x^2 - 28865567091425 x y - 3438665984061 x z \\
      & \qquad {} + 91586146842213 y^2 + 21820750429746 y z + 1299719350945 z^2
  \end{align*}
  be a pair of quadrics. We first determine a good basis of the pencil
  spanned by $Q_1$ and~$Q_2$ by reducing the binary cubic
  \[ \det (xM_1 + yM_2) = 27348 x^3 + 215720 x^2 y + 567184 x y^2 + 497080 y^3 \]
  with the approach described in~\cite{StollCremona}. Here $M_1$ and~$M_2$ are
  the matrices of second partial derivatives of $Q_1$ and~$Q_2$, respectively.
  This suggests the new basis
  \[ Q'_1 = -21 Q_1 + 8 Q_2\,, \qquad Q'_2 = -8 Q_1 + 3 Q_2 \]
  with already somewhat smaller coefficients; the new binary cubic is
  \[ -4 x^3 + 88 x^2 y + 112 x y^2 - 24 y^3 \,. \]
  Now we find the four points of intersection numerically. We obtain
  \begin{align*}
    P_1 &= (0.3038054131 + 0.0003625989 i : -0.0712511408 + 0.0000571409 i : 1) \\
    P_2 &= (0.3038054131 - 0.0003625989 i : -0.0712511408 - 0.0000571409 i : 1) \\
    P_3 &= (0.3038639670 + 0.0003672580 i : -0.0712419135 + 0.0000578751 i : 1) \\
    P_4 &= (0.3038639670 - 0.0003672580 i : -0.0712419135 - 0.0000578751 i : 1)
  \end{align*}
  and from this a matrix $\gamma \in \SL(3,\C)$ that brings these points in standard 
  position:
  \[ \gamma^{-1}
       = \begin{pmatrix}
           -13584.01 - 1762.69 i &   3186.66 + 407.04 i & -44719.72 - 5748.66 i \\
            8318.54 + 10882.75 i & -1945.84 - 2556.21 i & 27338.35 + 35854.08 i \\
            14176.55 + 2104.80 i &  -3324.73 - 486.76 i &  46662.58 + 6870.37 i
         \end{pmatrix}
  \]
  From this, we obtain a matrix representing $z(P_1+P_2+P_3+P_4)$ as
  {\small
  \[ \bar{\gamma} \begin{pmatrix} 3 & -1 & -1 \\ -1 & 3 & -1 \\ -1 & -1 & 3
                  \end{pmatrix} \gamma^\top
       = \begin{pmatrix}
             241474533625.0 & -1532325529959.9 & -182541212588.9 \\
           -1532325529959.9 &  9723681808257.5 & 1158352212636.4 \\
            -182541212588.9 &  1158352212636.4 &  137990925143.2
         \end{pmatrix}
  \]
  }%
  (For the actual computation, more precision is needed than indicated by
  the numbers above.)
  An LLL computation applied to this Gram matrix suggests the transformation given by
  \[ g = \begin{pmatrix}
            3780 & 19276 & -12561 \\
            -889 & -4515 &   2953 \\
           12463 & 63400 & -41405
         \end{pmatrix}
  \]
  and indeed, if we apply the corresponding substitution to $Q'_1$ and~$Q'_2$,
  we obtain the nice and small quadrics
  \[ 2 x^2 - x y + x z + 2 z^2 \qquad\text{and}\qquad {-2} x z + 3 y^2 - y z + 2 z^2 \,.
  \]
\end{Example}


\section{Reduction of Ternary Forms}

In this section, we apply the reduction theory of point clusters to ternary
forms. The idea is to associate to a ternary form, or rather, to the plane
curve it defines, a stable point cluster in a covariant way. This should be a
purely geometric construction working over any base field of characteristic zero.

We will only consider irreducible ternary forms~$F$ of degree~$d$. Assume
that the curve defined by~$F$ has $r$ nodes and no other singularities;
then its genus is
\[ g = \half (d-1)(d-2) - r \,, \]
and by
\cite[Exercise IV.4.6, p.~337]{Hartshorne}, the number of inflection points is
\[ 6(g-1) + 3d = 3d(d-2) - 6r \,. \]
We let $Z(F)$ be the sum of the inflection points, counted with multiplicity.
When is $Z(F)$ stable? The first condition
is that the multiplicity of any point must be less than
$d(d-2) - 2r$. Now the multiplicity is $2$ less than the order of
tangency of the inflectional tangent, so it is at most~$d-2$.
Hence the condition is satisfied if $d-2 < d(d-2) - 2r$, i.e., if
$0 < (d-1)(d-2)/2 - r = g$.
The second condition is that the multiplicities of points on a line
add up to less than $2d(d-2) - 4r$. Since there are at most $d$ points
on the curve on a line, this sum is at most $d(d-2)$
Hence the condition is satisfied if $r < d(d-2)/4$. 

In any case, if $F$ defines a nonsingular plane curve of positive genus,
then $Z(F)$ is stable, and we can set $z(F) = z(Z(F))$. We then define $F$
to be {\em reduced} if $z(F)$ is reduced (i.e., if $Z(F)$ is reduced).

\begin{Example}
  If $F$ is a nonsingular cubic, then it defines a smooth curve~$C$ of
  genus~1, with Jacobian elliptic curve~$E$. The 3-torsion subgroup $E[3]$
  acts on~$C$ by linear automorphisms of the ambient~$\BP^2$. The preimage
  of~$E[3]$ in~$\SL(3,\C)$ is a nonabelian group~$\Gamma$ of order~$27$ that acts
  irreducibly on~$\C^3$. Therefore there is a unique $Q \in \HH_{2,\C}$
  that is invariant under the action of~$E[3]$. This $Q$ is then~$z(F)$.
  If we know explicit matrices $M_T \in \SL(3,\C)$ for $T \in E[3]$ that
  give the action of~$E[3]$ on~$\BP^2$, then we can compute a representative
  of~$Q$ as a Hermitian matrix as
  \[ Q = \sum_{T \in E[3]} \overline{M_T}^\top M_T \,, \]
  compare~\cite[\S~6]{CFS}.
  
  We get the same result if we consider the cluster of inflection points
  on~$C$, since this cluster (which is a principal homogeneous space for
  the action of~$E[3]$) is invariant under the same group~$\Gamma$.
  Numerically, however, the method using the action of~$E[3]$ seems to be
  more stable. See~\cite[\S~6]{CFS} for some more discussion and details.
\end{Example}

In general, we have to find the inflection points numerically and then find the
minimum of~$D_{\tilde{Z}}$, also numerically. This can be done by a steepest
descent method. We will illustrate this by reducing a ternary quartic.

\begin{Example}
  Let
  \begin{align*}
    F(x,y,z) 
      &= 390908548757 x^4 - 1083699236751 x^3 y + 835578482044 x^3 z \\
      & \qquad {}
        + 1126610184312 x^2 y^2 - 1737329379412 x^2 y z + 669777678687 x^2 z^2 \\
      & \qquad {}
        - 520542386163 x y^3 + 1204081445939 x y^2 z - 928398396271 x y z^2 \\
      & \qquad {}
        + 238611653627 x z^3 + 90192376558 y^4 - 278168756247 y^3 z \\
      & \qquad {}
        + 321720059816 y^2 z^2 - 165373310794 y z^3 + 31877479532 z^4 \,.
  \end{align*}
  We compute the inflection points as the intersection points of $F = 0$ and
  $H = 0$, where $H$ is the Hessian of~$F$. This gives 24~coordinate vectors
  and defines the point cluster~$\tilde{Z}$. We then use a steepest descent
  method to find (an approximation to) $z(Z)$, represented by the matrix
  \[ \begin{pmatrix}
       367751.9942 & -254909.8720 &  196557.1210 \\
      -254909.8720 &  176692.9800 & -136245.3974 \\
       196557.1210 & -136245.3974 &  105056.8935
     \end{pmatrix} \,.
  \]
  LLL applied to this Gram matrix suggests the transformation
  \[ \begin{pmatrix}
        -7 &  23 &  -89 \\
       -34 & 118 & -443 \\
       -31 & 110 & -408
      \end{pmatrix} \,,
  \]
  which turns $F$ into
  \[ 3 x^4 - 3 x^3 y + 3 x^3 z + x^2 y^2 - 2 x^2 z^2 + x y^2 z - x y z^2
      - 2 x z^3 + 3 y^4 - 3 y^3 z + y^2 z^2 - 3 z^4 \,.
  \]
\end{Example}


\end{document}